\documentclass{article}[11pt]
\usepackage{epsfig,amsfonts}
\usepackage{amsmath}
\usepackage{amsthm}
\usepackage{graphicx,color}
\usepackage{pgf}
\usepackage{url}
\usepackage{colortbl}
\usepackage{algorithm}
\usepackage{lineno}
\usepackage{mathtools}
\usepackage{adjustbox}
\usepackage{algpseudocode}
\usepackage{caption}
\usepackage{setspace} 

\newtheorem{lemma}{Lemma}[section]

\counterwithin{equation}{section}

\newcommand\Tstrut{\rule{0pt}{2.6ex}}       

\newcommand{\resetcounters}{\setcounter{equation}{0} \setcounter{figure}{0}
 \setcounter{table}{0}}
 
\begin{document}

\title{\bf{Row-Splitting ILU Preconditioners for Sparse Least-Squares Problems}}

\author{Jennifer Scott\thanks{
STFC Rutherford Appleton Laboratory,
Harwell Campus, Didcot, Oxfordshire, OX11 0QX, UK
and School of Mathematical, Physical and Computational Sciences,
University of Reading, Reading RG6 6AQ, UK.
Correspondence to: {\tt jennifer.scott@stfc.ac.uk.}}
\and Miroslav T\r{u}ma\thanks{
Department of Numerical Mathematics, Faculty of Mathematics and Physics,
Charles University, Czech Republic. {\tt mirektuma@karlin.mff.cuni.cz.} 
}
}

\maketitle

\begin{abstract}
Preconditioning for overdetermined least‑squares problems has received comparatively little attention, and designing methods that are both effective and memory‑efficient remains challenging. We propose a class of ILU‑based preconditioners built around a row‑splitting strategy that identifies a well‑conditioned square submatrix 
 via an incomplete LU factorization and combines its incomplete factors with algebraic corrections from the remaining rows. This construction avoids forming the normal equations and is well suited to problems for which the normal matrix
 is ill‑conditioned or relatively dense. Numerical experiments on test problems arising from practical applications illustrate the effectiveness of the proposed approach when used with a Krylov subspace solver and demonstrate it can outperform preconditioners based on incomplete Cholesky factorization of the normal equations, including for sparse–dense problems, where the splitting naturally isolates dense rows.
\end{abstract}

\resetcounters
\section{Introduction}

Linear least-squares (LLS) problems arise in a wide range of practical applications. In this work, we focus on the overdetermined problem
\begin{equation}\label{eq:ls}
\min_{x} \|b - A x\|,
\end{equation}
where $b \in \mathbb{R}^m$, $A \in \mathbb{R}^{m \times n}$ with $m > n$, and $\|\cdot\|$ denotes the Euclidean norm.
If $A$ has full column rank, the problem admits a unique solution $x \in \mathbb{R}^n$.
Our interest lies in large-scale problems for which $A$ is sparse.

LLS problems may be solved by direct, iterative, or hybrid methods.
Direct methods rely on matrix factorizations and are designed to be robust, delivering solutions of predictable accuracy. However, they require explicit access to $A$, and for large-scale problems their computational cost in terms of time and memory can be prohibitive. Moreover, for highly ill-conditioned problems, maintaining numerical stability may be challenging. In many applications, the high accuracy provided by direct solvers may also be unnecessary or unjustified by the data.

These considerations make iterative methods an attractive alternative. Their effectiveness, however, depends critically on the availability of suitable preconditioners to accelerate convergence and limit the number of iterations required to achieve the desired accuracy. This is particularly important in applications where real-time solutions are needed and matrix–vector products with $A$ and $A^T$ are expensive, such as in data assimilation for numerical weather forecasting.

The importance of large-scale LLS problems has motivated extensive research into preconditioning strategies, particularly for
use with Krylov subspace methods CGLS \cite{hest:52}, LSQR \cite{pasa:82}, and LSMR \cite{fosa:2011}.
If $A$ has full column rank, CGLS and LSQR are mathematically equivalent to the conjugate gradient method applied to the symmetric positive definite normal equations
\begin{equation}\label{eq:normal}
Cx = A^T b, \qquad C = A^T A,
\end{equation}
while LSMR is equivalent to MINRES applied to \eqref{eq:normal}.
Relevant literature on preconditioning for LLS problems includes \cite{ardu:15, betu:03a,bjyu:99, cuha:09,jeaj:84,lisa:06,moha:2013,scot:2017}. Comprehensive treatments are provided in the recent monograph by Bj{\"o}rck~\cite{bjor:2024} and the review article \cite{sctu:2025}. A comparative study of direct methods and algebraic preconditioners a decade ago \cite{gosc:2015b, gosc:2017} highlighted the lack of robust general-purpose preconditioners for LLS problems. Although further progress has been made \cite{aljs:2022,ardu:2015,hoba:2018,wath:2022} and because no single approach works well for all problems, the development of effective and broadly applicable preconditioners remains an open challenge.

In this work, we investigate preconditioners based on row splitting of the matrix $A$. A standard row splitting takes the form
\begin{equation}\label{eq:splita}
A = \begin{pmatrix}
    A_1 \\ A_2
\end{pmatrix} ,
\end{equation}
where $A_1$ is a $k \times n$ matrix with $k \ge n$. In practice, the rows of $A$ are permuted to obtain \eqref{eq:splita}, but for simplicity we omit the permutation from the notation. Such approaches are sometimes referred to as subset preconditioners \cite{bjor:2024}.
In previous work, we studied
splittings based on row densities, in which the number $m-k$ of dense rows is small (see
 \cite{sctu:2017b,sctu:2019a,sctu:2021a} and the references therein). Here, we focus on the case $k = n$; $A$ may be sparse or some of the rows may be classified as dense. 
 We propose a class of ILU‑based row‑splitting preconditioners for large LLS problems. An incomplete LU factorization of the rectangular matrix 
$A$ is used to identify a well‑conditioned soarse square block $A_1$
 and to construct an additive‑correction preconditioner that avoids forming the normal matrix 
$C$, which is often significantly denser and more ill‑conditioned than 
$A$. Efficient algorithms are developed for applying the preconditioner, including the treatment of the auxiliary system associated with the remaining rows. To support this framework, we introduce a rectangular variant of an ILUP factorization with threshold pivoting, that is designed to maintain sparsity and ensure robustness. Numerical experiments show that the resulting preconditioners are effective for a range of problems, including quasi‑square and sparse‑dense cases, and offer competitive performance relative to incomplete Cholesky preconditioning of the normal equations.

The remainder of the paper is organised as follows.
Section~\ref{sec:row-splitting} reviews previous work on row-splitting techniques for sparse LLS problems. Section~\ref{sec:splitting-strategies} describes the proposed
splitting-based approach and the practical aspects of the construction of the ILU-based preconditioner are discussed in Section~\ref{sec:ILUP}. Numerical experiments on a range of challenging problems coming from practical applications are presented in Section~\ref{sec: experiments}, including comparisons with an established
incomplete Cholesky factorization preconditioner. Conclusions are drawn in Section~\ref{sec: conclusions}.

\section{Previous approaches involving row splitting}\label{sec:row-splitting}

Row-splitting was introduced in the early 1960s by L{\"a}uchli \cite{lauc:61}. 
He took a splitting  
(\ref{eq:splita}) with $A_1$ square and non-singular and employed CGLS with the preconditioner $M^{-1}=A_1^{-1}$.
With a conformal splitting of the vector $b$, the transformed problem
becomes
\[
\min_y \left\| \begin{pmatrix} b_1  \\ b_2 \end{pmatrix} -
\begin{pmatrix} I  \\ A_2 A_1^{-1} \end{pmatrix} y 
 \right\|,\qquad y = A_1 x.
\]
The application of $ A_1^{-1}$ can be performed by computing its LU factorization, possibly incorporating  pivoting for numerical stability \cite{gipe:88, saun:79}.
Convergence of the resulting algorithm is discussed by Freund~\cite{freu:87}.
Bj{\"o}rck and Yuan~\cite{bjyu:99}
use LSQR and study the conditioning of the 
preconditioned matrix (see also \cite{bjor:2024}). 
In particular, because $A_2A_1^{-1}$ has at most $m-n$ distinct singular values, fast convergence can be expected for quasi-square LLS problems, that is, problems with $m-n$ small,. These arise, for example, in the simulation of multibody systems such as robot arms or satellites \cite{cadj:98}, and in the linear least-squares subproblems occurring in variational data assimilation \cite{nla_da:2025}.

Ideally, the splitting should be such that $A_1$ captures as much relevant information about the problem as possible. In some applications, the structure of $A$
may naturally suggest an appropriate choice (e.g., geodetic applications given in \cite{plem:78}). Elsewhere, the splitting may be motivated by specific properties of the problem itself \cite{pore:69}. For instance, $A$ may have a block of rows that are significantly more weighted or denser than the remainder. 
Another possibility is to find a well-conditioned square submatrix $A_1$ of $A$. The quality of $A_1$ can be measured by the modulus of its determinant (also called the volume of the submatrix). This quantity is maximized in the approach introduced in \cite{tyyz:97}; see also  \cite{goty:01}. Arioli and Duff~\cite{ardu:2015} seek to 
split the rows of $A$ to 
reduce the Euclidean norm of $AA_1^{-1}$; they then factorize $A_1$ and use it to precondition
a symmetric quasi-definite linear system that has the same solution
as the original least-squares problem.

A seminal approach to solving linear least-squares problems that is closely tied to the idea of effective row splitting was introduced by Peters and Wilkinson in 1970 \cite{pewi:70}. This method avoids constructing the 
potentially ill-conditioned normal matrix $C$ by computing a rectangular LU factorization of the (dense) matrix $A$ with pivoting,
that is,
\begin{equation}\label{eq:LUa}
P_r A P_c  =  \begin{pmatrix}A_1 \\ A_2 \end{pmatrix} = L\,U = \begin{pmatrix}L_1 \\ L_2 \end{pmatrix} U.
\end{equation}
Here $L_1$ and $U$ are $n \times n$ lower and upper triangular matrices,
respectively, with $L_1$ having 1's on its diagonal. The row and column permutations $P_r$ and $P_c$ are chosen so that the  off-diagonal entries of $L$ are bounded. This is likely to keep $L=\{l_{ij}\}$ well-conditioned; ill-conditioning in $A$ is reflected in $U$
(see \cite{bjor:76, saun:79} for early work in  the 1970s
and further discussion is given in the monograph \cite{bjor:2024}).
The solution $x$ of the original LLS problem is obtained  by solving 
\begin{equation}\label{eq:PW}
\min_y \| P_r b - Ly \, \|, \qquad UP_c \,x = y.
\end{equation}
The corresponding normal equations $L^T L y = L^TP\,b$
are referred to as the $L$-normal equations.
   This approach has been reported on by Howell and Baboulin~\cite{hoba:2016}; see also the recent discussion of Wathen~\cite{wath:2025}.
   
Bj{\"o}rck and Duff~\cite{bjdu:80} extend the Peters-Wilkinson  method to sparse systems by relaxing the stability
criterion during the LU factorization, choosing the permutations $P_r$ and $P_c$ in (\ref{eq:LUa})
so that $L_{ij} \le \tau$ for some modest $\tau$ , thus seeking to balance numerical stability
with preserving sparsity in the factors. 
However, the threshold-based pivoting strategies 
used by sparse direct linear solvers that offer the greatest guarantees of stability are the 
most computationally intensive and can lead to excessive fill-in in the factors \cite{bjdu:80, hoba:2018}.  
Howell and Baboulin~\cite{hoba:2018} address this by computing an LU factorization with threshold partial pivoting (trading stability for computational speed) and then solve the $L$-normal equations
using preconditioned LSQR with $L_1$ as the preconditioner.

A potential difficulty of Peters-Wilkinson type approaches is that solving (\ref{eq:PW}) necessitates a final back substitution involving $U$.
In some applications, $U$ can have nearly zero diagonal entries. This is seen in \cite{hoba:2018}, where a significant number of the problems 
in the test set taken from the Suite Sparse Matrix Collection \cite{dahu:2011} are discarded because of small entries on the diagonal on $U$. Unfortunately, near-singularity of $U$ 
typically remains undetected until the LU factorization of $A$ has been performed.
Moreover, even with so-called rank-revealing pivoting strategies and employing double precision
arithmetic, in practice it is not straightforward to decide whether a small diagonal entry would be zero in exact arithmetic (and hence whether the problem is rank deficient).

For problems in which $A$ is of full rank and is quasi square, Cardenal et al \cite{cadj:98}
use the LU factors of $A$ to compute a null space of $A^T$. The vector $b$ is
then projected onto this subspace and the least squares solution obtained from the solution of the resulting reduced problem. For some relatively small examples, significant savings compared to the Peters-Wilkinson approach are reported.

We observe that an alternative to using an LU factorization is to employ a QR factorization.
This can offer advantages in terms of stability but for large sparse problems, it can be prohibitively expensive to compute and 
typically results in denser factors. QR factorizations are not used
in this paper.

\section{Row splitting-based preconditioning}\label{sec:splitting-strategies}
We are interested in exploring building
effective preconditioners for large sparse LLS problems by  
choosing $A_1$ using an incomplete factorization and then combining 
the incomplete factors of $A_1$ with information from $A_2$.
The novelty lies in using an incomplete factorization to obtain the row splitting
and then using the incomplete factors and $A_2$ to construct an algebraic preconditioner.
Applying the preconditioner involves a direct solver or using an iterative solver.

Let us assume that we have an approximate solution $x^{(0)}$ of the LLS problem ($x^{(0)}$ could be zero). Our approach uses the idea of approximating an additive correction. To motivate this, consider iterative refinement for solving the normal equations  \cite{mole:67,wilk:63}. This seeks to obtain an additive correction to $x^{(0)}$ by solving the correction equation 
\begin{equation*}
C\,\delta=A^Tr^{(0)},  \quad r^{(0)} =   b - A\, x^{(0)}, \qquad C = A^TA.
\end{equation*}
If solved exactly for the correction $\delta$ then the exact least squares solution 
$x = \delta+ x^{(0)}$ is obtained in a single step.
In practice, the corrected solution is also an approximate solution because the correction equation is solved approximately (for example, using an approximate factorization
of $C$, as discussed in \cite{cada:2025,cahi:2017,cahi:2018}). 
Now consider the factorization-free variant of the preconditioned CGLS method
outlined  in Algorithm~\ref{alg:pcgls1} (see, for example, \cite{hahn:2023}).
This is also called left preconditioned CGLS because it is mathematically equivalent to left preconditioning the normal equations
\[ M^{-1} C \,x = M^{-1} A^Tb.\]
On the first iteration step of Algorithm~\ref{alg:pcgls1}, the direction used to update the initial solution $x^{(0)}$ is based on the corresponding initial residual $r^{(0)}$. On subsequent steps, the search direction is determined by applying a transformation $M^{-1}A^T$ that approximates applying $C^{-1}A^T$ to the latest least squares residual (Steps 11 and 12). 

\begin{algorithm}\caption{Factorization-free (left) preconditioned CGLS \vspace{1mm}}
\label{alg:pcgls1}
\textbf{Input:} Matrix $A \in R^{m \times n}$, 
vector $b \in \mathbb{R}^{m}$, initial solution $x^{(0)}\in \mathbb{R}^{n}$ and preconditioner $M^{-1}\in R^{n \times n}$. 
\\
\textbf{Output:} Computed least-squares solution $x$
\vskip+2mm\hrule\vskip+2mm
\setstretch{1.15}\begin{algorithmic}[1]
\State $r^{(0)} = b -Ax^{(0)}$, $\rho^{(0)} = 1$
   \State    $z^{(0)}  =A^T r^{(0)}$ 
   \State $h^{(0)} = z^{(0)} = M^{-1}z^{(0)}$\Comment{$ M^{-1}$ can be combined with $A^T$ i.e., $h^{(0)} = z^{(0)} = M^{-1} A^T r^{(0)}$} 
  \State $\rho^{(0)} = (z^{(0)}, h^{(0)})$    
\State $p^{(0)}=z^{(0)}$
\For{$i=0,1,2, \dots until$ convergence}
  \State $q^{(i)} = A\,p^{(i)}$
  \State $\alpha^{(i)} = \rho^{(i)}/(q^{(i)}, q^{(i)})$
  \State $ x^{(i+1)} = x^{(i)} + \alpha^{(i)} p^{(i)}$
  \State $r^{(i+1)} = r^{(i)} - \alpha^{(i)} q^{(i)}$
 \State    $z^{(i+1)} = A^T r^{(i+1)}$
 \State    $h^{(i+1)} = M^{-1}z^{(i+1)}$ \Comment{$ M^{-1}$ can be combined with $A^T$ i.e., $h^{(i+1)} = M^{-1}A^T r^{(i+1)}$}
  \State $\rho^{(i+1)} = (z^{(i+1)}, h^{(i+1)})$
  \State  $\beta^{(i+1)} = \rho^{(i+1)}/\rho^{(i)}$
  \State $p^{(i+1)} = h^{(i+1)}+ \beta^{(i+1)} p^{(i)}$
\EndFor
\end{algorithmic}
\end{algorithm}

To derive a left preconditioner based on a given row splitting of $A$ and 
the employment of an additive correction, 
we derive - using a closed-form expression that transforms the problem residual - the quantity that must be added to $x^{(0)}$ to obtain the least-squares solution. 
Approximating the additive correction leads to a form of preconditioning (see \cite{sctu:2017b} in the context of sparse-dense least squares problems).
The additive correction is formulated in Lemma~\ref{th:solut}.
Throughout the discussion, we  denote $C_i = A_i^TA_i$ ($i = 1,2$).

\begin{lemma} \label{th:solut} 
Assume a row splitting of the $m \times n$ ($m \ge n$) full rank matrix $A$ of the form
(\ref{eq:splita}),
where the $k \times n$ ($k \ge n$) matrix $A_1$ is of  full rank.
Let $x^{(0)}$ be an approximate solution of the LLS problem (\ref{eq:ls})
 and let $r_1= b_1-A_1x^{(0)}$ and $r_2=b_2-A_2 x^{(0)}$ be the components of its residual vector $r^{(0)}$. 
Then the least-squares solution  of (\ref{eq:ls}) is $x = x^{(0)} + \delta$, where the additive correction $\delta =C^{-1}A^T r^{(0)}$ is given by

\begin{equation}\label{eq:fullspl_1}
  \delta  =  C_1^{-1}(I- A_2^T S^{-1}A_2C_1^{-1})z,
\end{equation}
with
\begin{equation}\label{eq:matrixS}
z= A^T r^{(0)} \quad \mbox{and} \quad S = I+A_2C_1^{-1}A_2^T = I +(A_2A_1^{-1})(A_2A_1^{-1})^T  \in \mathbb
R^{(m-k) \times (m-k)}. 
\end{equation}

\end{lemma}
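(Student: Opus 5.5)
The proof has two parts. First, the additive correction $\delta = C^{-1}A^T r^{(0)}$ gives the least-squares solution. Second, $C^{-1}$ can be rewritten through the row splitting by the Sherman--Morrison--Woodbury identity.

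\emph{First part.} Since $A$ has full column rank, $C=A^TA$ is symmetric positive definite. The least-squares solution is therefore the unique solution $x$ of the normal equations (\ref{eq:normal}). Set $\delta = C^{-1}A^Tr^{(0)}$. Then
\[
C(x^{(0)}+\delta)=Cx^{(0)}+A^T(b-Ax^{(0)})=A^Tb,
\]
so $x=x^{(0)}+\delta$. Writing $z=A^Tr^{(0)}=A_1^Tr_1+A_2^Tr_2$ only records the conformal splitting of the residual, so the remaining task is to show that $C^{-1}=C_1^{-1}(I-A_2^TS^{-1}A_2C_1^{-1})$.

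\emph{Second part.} From (\ref{eq:splita}) we have $C=A_1^TA_1+A_2^TA_2=C_1+A_2^TA_2$. Because $A_1$ has full column rank, $C_1$ is symmetric positive definite and hence invertible. The matrix $S=I+A_2C_1^{-1}A_2^T$ is the identity plus a symmetric positive semidefinite matrix, so it is positive definite and invertible. The Woodbury identity then gives
\[
C^{-1}=C_1^{-1}-C_1^{-1}A_2^T\bigl(I+A_2C_1^{-1}A_2^T\bigr)^{-1}A_2C_1^{-1}=C_1^{-1}\bigl(I-A_2^TS^{-1}A_2C_1^{-1}\bigr).
\]
Rather than just citing the identity, one can check it directly by multiplying by $C$:
\[
(C_1+A_2^TA_2)C_1^{-1}(I-A_2^TS^{-1}A_2C_1^{-1}) = I + A_2^T\bigl[I - S^{-1} - A_2C_1^{-1}A_2^TS^{-1}\bigr]A_2C_1^{-1},
\]
and the bracket vanishes because $I-(I+A_2C_1^{-1}A_2^T)S^{-1}=I-SS^{-1}=0$. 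Applying this expression for $C^{-1}$ to $z$ yields (\ref{eq:fullspl_1}).

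\emph{Second form of $S$.} The expression $S=I+(A_2A_1^{-1})(A_2A_1^{-1})^T$ in (\ref{eq:matrixS}) presupposes that $A_1$ is square, i.e.\ $k=n$, the case the paper focuses on. In that case $C_1^{-1}=A_1^{-1}A_1^{-T}$, and substituting gives the stated form. For $k>n$ only the first form is meaningful, unless $A_1^{-1}$ is read as the pseudoinverse $A_1^{+}$, which satisfies $A_1^{+}A_1^{+T}=C_1^{-1}$.

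All steps are routine. The only points needing care are confirming that $C_1$ and $S$ are invertible (which follows from the rank assumptions) and recording the $k=n$ restriction for the second form of $S$.
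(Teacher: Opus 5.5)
Your proposal is correct and follows the paper's own proof. Both apply the Woodbury identity to $C = C_1 + A_2^TA_2$, with $C_1$ and $S$ invertible by positive (semi)definiteness. Your extra steps are valid refinements that the paper leaves implicit:
\begin{itemize}
\item checking that $x^{(0)}+\delta$ solves the normal equations;
\item verifying the identity by direct multiplication, which also gives the correct inner factor $I + A_2C_1^{-1}A_2^T$ where the paper's display drops the transpose;
\item noting that the form $I + (A_2A_1^{-1})(A_2A_1^{-1})^T$ requires $k=n$, or else reading $A_1^{-1}$ as $A_1^{+}$.
\end{itemize}
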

\begin{proof}
The matrix $C_1$ is non singular.  $S$ is also non singular because it is the sum of a symmetric positive definite matrix and a positive semi definite matrix. The result is derived from a straightforward application of the  Woodbury formula \cite{wood:49, wood:50},
which enables $C^{-1}$  to be written in the form
 \begin{eqnarray*}
C^{-1}&=& (C_1 + C_2)^{-1} \\ &=& (C_1 + A_2^T A_2)^{-1} \\
   &=&  C_1^{-1} - C_1^{-1}A_2^T(I + A_2C_1^{-1}A_2)^{-1}A_2 C_1^{-1}.
\end{eqnarray*}
\end{proof} 
Setting $z = A_1^T r_1 + A_2^T r_2$ and using $S^{-1} = I - S^{-1}A_2C_2^{-1}A_2^T$, we can rewrite
(\ref{eq:fullspl_1})  as
\begin{equation}\label{eq:fullspl}
  \delta =  C_1^{-1}(A_1^Tr_1 -  A_2^T S^{-1} (A_2C_1^{-1}A_1^Tr_1 - r_2)).
\end{equation}
This shows how $\delta$ can be computed using $C_1$ without constructing $C$.
But obtaining (\ref{eq:fullspl}) 
directly is generally impractical. In particular, constructing $S$ and factorizing it may  be computationally too expensive and require too much memory. 
However, by replacing the quantities 
that need to be inverted by appropriate approximations, it may be feasible to employ the resulting
$\widetilde S$ within a preconditioned iterative solver. 
In particular, suppose that we have non singular approximations $M_a^{-1} \approx C_1^{-1}$ and $M_b^{-1} \approx S^{-1}$.
The approximate additive correction $\delta \approx M^{-1} A^T r = M^{-1} z $ can be used to define a preconditioner within Algorithm~\ref{alg:pcgls1}.
The calculation of $h = M^{-1}z$ (in Steps 11 and 12) can be implemented using Algorithm~\ref{alg:apply1}. 
It requires  a matrix-vector product with $A_2$ and with $A_2^T$, plus
 two linear system solves with $M_a$ and one with $M_b$.
 If $m-k$ is small, the operations involving $A_2$ and $M_b$ are cheap.
\begin{algorithm}[htpb]
\caption{Application of the additive correction preconditioner \vspace{1mm}}\label{alg:apply1}
\textbf{Input:} $z \in \mathbb R^{n}$, $A_2  \in \mathbb R^{(m-k) \times n}$, $M_a \approx C_1 \in \mathbb R^{n \times n}$, $M_b \approx S \in \mathbb R^{(m-k) \times (m-k)}$.\\
\textbf{Output:} Preconditioned vector $h =M^{-1}z$.
\vskip+2mm\hrule\vskip+2mm
\setstretch{1.15}\begin{algorithmic}[1]
\State Solve $M_a u_a = z$
\State Compute $w_b =   A_2 u_a$
\State Solve $M_b v_b =w_b$
\State Compute $w_a = u_a - A_2^T  v_b$
\State Solve $M_a h = w_a$
\end{algorithmic}
\end{algorithm}

The general row splitting-based framework offers a number of possibilities for approximating the matrices $M_a$ and $M_b$. 
In the rest of this paper, we concentrate on
the special case $k=n$ 
and $A_1$ is a square non singular submatrix of $A$. We use an LU factorization-based approach to compute and apply the preconditioner
without constructing and factorizing the matrix $C_1$. 

\subsection{$k=n$ and $A$ quasi square: direct solver}\label{subsection:quasi}
We start by assuming we have performed an LU factorization (\refeq{eq:LUa}) of $A$ and that $A$ is quasi square.
Omitting the permutation matrices to simplify the notation, the matrix $S$ from (\ref{eq:matrixS}) becomes
\begin{equation}\label{eq:S}
S = I +(A_2A_1^{-1})(A_2A_1^{-1})^T= I + YY^T, 
\end{equation}
where 
\begin{equation}\label{eq:Y}
Y = A_2A_1^{-1} = L_2 L_1^{-1} \in \mathbb R^{(m-n) \times n}.
\end{equation}
Setting $x^{(0)} = 0$, we have
$r_1 = b_1$, $r_2 = b_2$, and (\refeq{eq:fullspl_1}) becomes
\begin{eqnarray*}\label{eq:square A1}
    x = \delta &=& A_1^{-1}(A_1^{-T} - A_1^{-T} A_2^{-T}S^{-1}A_2A_1^{-1}A_1^{-T})z  \\
    &=& A_1^{-1}(A_1^{-T} -Y^{T} S^{-1}YA_1^{-T})z \\
    &=& A_1^{-1}(b_1 + Y^Tb_2 -Y^{T} S^{-1}Y (b_1+Y^Tb_2)) \\
    &=& A_1^{-1}(b_1 - Y^T S^{-1}Yb_1 +(Y^T - Y^TS^{-1}YY^T)b_2) \\
   &=& A_1^{-1}(b_1 - Y^T S^{-1}Yb_1 + Y^TS^{-1}(S - YY^T)b_2) \\
    &=& A_1^{-1}(b_1 + Y^T S^{-1}(b_2 - Yb_1)).
\end{eqnarray*}
From (\refeq{eq:square A1}), the least squares solution can be computed
by first solving the triangular systems $L_1^T Y^T = L_2^T$, then
forming $S$ and solving the SPD system
\begin{equation}\label{eq:quasi_square_w} 
Sw = b_2 - Yb_1.
\end{equation}
Provided $A$ is quasi square, $S$ 
is small and can be held as a dense matrix and LAPACK routines employed for this solve.
Finally, the factors of $A_1$ can be used to solve the sparse system
\begin{equation}\label{eq:quasi_square_exact}
A_1 x =L_1U x = b_1 + Y^T w.
\end{equation}
We observe that this formulation, which avoids the normal matrices, is also derived in \cite{cadj:98} using the augmented system formulation of the least-squares problem.

\subsection{$k=n$: ILU-based preconditioner}
For large-scale problems we may not want, or be able, to compute an LU factorization of $A$. We then need to find a preconditioner for use with an iterative solver.
We modify the above approach
to replace the LU factorization by an
incomplete (ILU) factorization.
Let the ILU factorization of $A$ be given by 
\begin{equation}\label{eq:ILU}
P_rA P_c   \approx \begin{pmatrix}\widetilde L_1 \\ \widetilde L_2 \end{pmatrix} \widetilde U.
\end{equation}
Here and elsewhere the notation $\,\widetilde{}\,$  is used
to denote incomplete quantities.
Omitting the permutation matrices,  
\begin{equation} \label{eq: S}
\widetilde S = I + \widetilde Y \widetilde Y^T,
\end{equation}
with $\widetilde Y = \widetilde L_2 \widetilde L_1^{-1}$. The exact correction derived in Subsection~\ref{subsection:quasi} and given by (\ref{eq:quasi_square_w}) and (\ref{eq:quasi_square_exact}) can be replaced by the approximate quantities 
$$\widetilde Sw = r_2 - \widetilde Yr_1 \quad \mbox{and} \quad A_1 h = r_1 + \widetilde Y^T w. $$
It follows that, using the residual components $r_1$ and $r_2$, Steps 3 and 12 of Algorithm~\ref{alg:pcgls1} can 
combine the preconditioner with $A^T$.
The application of the preconditioner is given in Algorithm~\ref{alg:apply_ilu}. This requires a matrix-vector product with 
$\widetilde Y$ and $\widetilde Y^T$, and a solve with each of $\widetilde S$, $\widetilde L_1$ and $\widetilde U$.
Importantly, through $\widetilde Y$, the preconditioner involves both $\widetilde L_1$ and $\widetilde L_2$.

\begin{algorithm}[htpb]
\caption{Application of the LU (or ILU) left preconditioner \vspace{1mm}}\label{alg:apply_ilu}
\textbf{Input:} Residual components $r_1, r_2$,  (incomplete) factors $\widetilde L_1$ and $\widetilde U$ of $A_1 \in \mathbb R^{n \times n} $.\\
\textbf{Output:} Preconditioned vector $h =M^{-1}z$, where 
$z = A_1^T r_1 + A_2^T r_2 \in \mathbb R^n$.
\vskip+2mm\hrule\vskip+2mm
\setstretch{1.15}\begin{algorithmic}[1]
\State Compute $u = r_2 - \widetilde Y r_1$ \Comment{$\widetilde Y = \widetilde L_2 \widetilde L_1^{-1}$}
\State Solve $\widetilde S w =u$ \Comment{$ \widetilde S = I + \widetilde Y \,\widetilde Y^T$}
\State Compute $y= r_1 + \widetilde Y^T w$
\State Solve $\widetilde L_1 v =y$ then $ \widetilde Uh = v$
\end{algorithmic}
\end{algorithm}

The matrix $\widetilde Y$ can be computed and stored as a  sparse matrix. 
This requires solving the sparse triangular systems $\widetilde L_1^T \widetilde Y^T = \widetilde L_2^T$.
To do this efficiently, a back substitution routine with $\widetilde L_1^T$
that handles sparse right-hand side vectors corresponding to the $m-n$ columns of $\widetilde L_2^T$ is needed. Because the memory requirements are not known a priori, this is first  determined by performing symbolic substitutions.
Alternatively,  $q = \widetilde Y r_1$ may be computed by solving
$\widetilde L_1 q_1 = r_1$ and then forming the matrix-vector product $q = \widetilde L_2\, q_1$ (and similarly, for $q = \widetilde Y^T w$). Avoiding
computing and storing $\widetilde Y$ explicitly  saves memory and, if the number of applications of the preconditioner is small, it also reduces the total flop count.

The $(m-n) \times (m-n)$ matrix $\widetilde S= I + \widetilde Y \,\widetilde Y^T$ can be assembled row-by-row and factorized as a sparse matrix. Alternatively, it  can be held as a dense matrix and factorized using LAPACK routines. The factorization only needs to be performed once and the factors reused for each application of the preconditioner.
Factorizing $\widetilde S$ is  feasible if $A$ is quasi square. If not, the storage for
$\widetilde S$ dominates that needed for the ILU factors and
limits the size of problems that can be solved. 

For more general $m$ and $n$, $\widetilde S$ may be computed
implicitly as $I + \widetilde L_2 \widetilde L_1^{-1} \widetilde L_1^{-T} \widetilde L_2^T$ (avoiding forming $\widetilde Y$). The linear system $\widetilde S w =u$ in Step 2 
of Algorithm~\ref{alg:apply_ilu} is then solved using an iterative solver. Because $\widetilde S$ is SPD, the conjugate gradient method is the obvious choice. Each matrix-vector product with $\widetilde S$ involves
a matrix-vector product with each of $\widetilde L_2$ and $\widetilde L_2^T$ and a solve with each of $\widetilde L_1$ and $\widetilde L_1^T$.

The use of an iterative solver at Step 2 suggests that $\widetilde S$ 
may be replaced by an approximation $\widehat S \approx I + \widetilde Y \,\widetilde Y^T$ such that $\widehat S w = u$ is less expensive to solve. The simplest choice is $\widehat S = I$, avoiding altogether the need to solve a linear system.

\subsection{Sparse-dense least-squares problems}
\label{sec:sparse-dense}

Least-squares problems that are sparse with the exception of a small number of rows that contain significantly more entries than the other rows can arise in practice (examples and references are given in \cite{sctu:2017b}).
An obvious row splitting is 
$$ P_rA = \begin{pmatrix}
    A_s \\ A_d
\end{pmatrix}, $$
where $A_s$ comprises the $m_s=k$ sparse rows and
$A_d$ is the $m_d= m-k$ rows that are classified as dense.
The matrix $C_s = A_s^T A_s$ is  sparse and symmetric, but not necessarily non singular.
Suppose we have a non singular approximation $M_s \approx C_s$  in factorized form, that is, $M_s = \widetilde L_s \widetilde L_s^T$, where $\widetilde L_s$ is an incomplete Cholesky factor of $C_s(\alpha) = C_s + \alpha I$, where $\alpha >0$ is small and chosen so that $C_s(\alpha)$ is SPD and the factorization does not breakdown.  The $m_d$ triangular systems
$\widetilde L_s \widetilde Y_d^T = A_d^T$ are solved for $ \widetilde Y_d^T$ and then 
the small dense SPD matrix $\widetilde S_d = I+\widetilde Y_d\, \widetilde Y_d^T$ is computed and factorized using dense linear algebra kernels to obtain $M_d \approx \widetilde S_d$.

This approach can be applied for general problems with $k \ge n$ provided $m-k$ is small.
However, it is necessary to predetermine the rows in $A_d$; see \cite{sctu:2021a} for how this might be done. Furthermore, it involves factorizing the normal matrix $C_s$.
By employing  an ILU factorization algorithm that incorporates row
interchanges to preserve sparsity (see Section~\ref{sec:ILUP} below)
both these issues are avoided. Provided $m_d \le m-n$,
the $\widetilde L_1$ factor of $A$ is sparse and at least $m_d$ rows of $\widetilde L_2$ are dense. In this case, using the notation of the previous section,
$\widetilde Y$  contains dense rows and $\widetilde S$ is dense and
Algorithm~\ref{alg:apply_ilu} can again be employed to apply the preconditioner.

\subsection{Adding and removing data}

In some situations, it is necessary to solve a sequence 
of least-squares problems in which a small number of rows are added to or deleted from the least-squares matrix \cite{mmgh:2017}.
Consider first adding a row $a$ to $A$ after its LU (or ILU) factorization (\ref{eq:LUa}) has been computed. The standard procedure for updating the
factorization keeps $L_1$ unchanged and adds an extra row $l$ to $L_2$
that is obtained by solving $(UP_c)^T l = P_ra$. The matrix $Y$
given by  (\ref{eq:Y}) has an additional row and $Y^T$ has an additional column. If $Y^T$ 
is computed explicitly, an extra triangular solve is needed to compute the additional column  (but the other columns are unchanged). The matrix $S$ 
given by (\ref{eq:S}) can be updated to include the contribution from the extra column in $Y^T$. Thus, updating the preconditioner is straightforward and inexpensive provided the factorization remains stable. If adding a  row makes $A$ rank deficient, then the update fails and the new row would require a new pivot for stability, making a refactorization necessary.

For the case that one or more rows are removed from $A$, assume we already have
a preconditioner for $A$,  which we denote by $M_A^{-1} \approx C^{-1}$. Let $A_2$ be the rows of $A$ that are to be removed and set $C_1 = C- C_2$. Provided $A_1$ is of full rank, applying the Woodbury formula, we obtain
$$C_1^{-1} = C^{-1}+ C^{-1}A_2^T(I - A_2C^{-1}A_2^T)^{-1}A_2C^{-1} .$$
$M_A$ can be used in approximating $(I - A_2C^{-1}A_2^T)^{-1}$.
In particular, if $A \approx \widetilde L \widetilde U$ then 
the approach of Algorithm~\ref{alg:apply_ilu} can be adapted to apply the preconditioner for the reduced problem.

\section{ILUP preconditioner construction}
\label{sec:ILUP}
Over the past several decades, a variety of ILU techniques have been proposed for constructing
preconditioners for use with Krylov subspace methods
for solving sparse linear systems; a few have been developed into general-purpose mathematical software for square systems, such as \cite{ilupack, lish:2011}. Key approaches include the level-of-fill structure-based approach, numerical threshold-based methods, and numerical inverse-based multilevel techniques. There are numerous publications in the literature; relevant references include \cite{bosa:06,hypo:01,lish:2011,saad:94,saad:03}.
Here we employ a variant of ILUP, a sophisticated threshold-based ILU scheme 
that uses a dual numerical dropping strategy and incorporates numerical pivoting (see \cite[Chapter 10]{saad:03}).

Threshold-based ILU methods have been successfully applied as black-box approaches for solving a wide range of practical problems. They involve determining which entries to retain in the sparse triangular factors by examining their absolute values and discarding those below a prescribed tolerance~$\tau$. In ILUP($p,\tau$), at each stage of the factorization, entries of absolute value less than $\tau$ are removed from the current column of $L$ and $U$, after which at most $p$ of the largest remaining entries are retained; this controls memory growth.  This approach allows for both structural considerations and numerical information, and 
the inclusion of pivoting seeks to ensure stability while retaining sparsity.

For our numerical experiments, we have developed a prototype Fortran implementation of ILUP($p,\tau$) for rectangular $A \in \mathbb{R}^{m \times n}$
with $m \ge n$; it is based on the sparse (complete) LU factorization algorithm of Gilbert and Peierls~\cite{gipe:88}.
Their algorithm was proposed for square systems. It performs Gaussian elimination
in column-wise order, and splits the computation of each column into
a symbolic phase followed by the
numerical stage that involves solving a sparse triangular
system. Partial pivoting by rows is performed in time proportional
to the arithmetic operations.
We modified the approach to compute an ILU
factorization of rectangular $A$. We incorporate threshold partial pivoting and 
use modifications to avoid small diagonal entries in $U$, allowing us to handle matrices that either loose the linear dependence of the columns because of the incompleteness of the factorization or were originally rank-deficient.

Our ILUP$(p,\tau)$ procedure is summarized in Algorithm~\ref{alg:ILU}. Although the method allows for distinct limits $p_L$ and $p_U$ on the maximum number of retained entries in each column of $\widetilde L$ and $\widetilde U$, introducing these additional parameters offers no consistent performance advantage in our experiments. To avoid unnecessary user tuning, we set $p = p_L = p_U$ throughout.

In addition to the dropping parameters $p$ and $\tau$, the user must specify the pivoting threshold $\mu$ and the parameter \texttt{small}, which bounds the minimum admissible magnitude of the diagonal entries of $\widetilde{U}$. At each step $j$, the pivot is selected using Algorithm~\ref{alg:find_pivot}. The threshold $\mu$ governs the trade-off between numerical stability and sparsity: values close to $1$ emphasize stability, whereas smaller values promote sparsity. Following common practice in sparse LU factorization algorithms, we set $\mu = 0.1$ in all experiments.

Step~6 of Algorithm~\ref{alg:ILU} requires a sparse triangular solve. Although the details are omitted here, we note that a depth-first-search procedure is first employed to determine the sparsity pattern of the vector $\widetilde{U}_{1:j-1,j}$ prior to performing the solve.

\begin{algorithm}\caption{\bf Column-wise ILUP($p, \tau$) factorization with threshold partial  pivoting}\label{alg:ILU}
\textbf{Input:}  Matrix $A\in \mathbb{R}^{m \times n}$ with  $m \ge n$,
dropping controls  $p$ and $\tau$,  $small>0$  controls the diagonal entries of $\widetilde U$, and
threshold pivoting parameter $0 < \mu \le 1$. 

\smallskip
\textbf{Output:} ILU factorization  $PA \approx  \widetilde L \widetilde U$, where $\widetilde L \in \mathbb{R}^{m \times n}$ is lower trapezoidal with 1's on the diagonal, $\widetilde U \in \mathbb{R}^{n \times n}$  is upper triangular and $P \in \mathbb{R}^{m \times m}$ is a row permutation. 
\smallskip\hrule\smallskip

\setstretch{1.17}\begin{algorithmic}[1]
\State Compute row counts $rc(1:n)$ of $A$
\For{$j=1:n$}
  \If {$j=1$}
  \State Set $\widetilde L_{1:m,1} = A_{1:m,1}$
  \Else
  \State  {Triangular solve  $\widetilde L_{1:j-1\,,1:j-1} \widetilde U_{1:j-1,\,j} = A_{1:j-1,\,j}$} 
  \State Compute {$\widetilde L_{j:m,\,j} = A_{j:n,\,j} - \widetilde L_{j:m,\,1:j-1} \widetilde U_{1:j-1,j}$} 
      \State Keep at most $p$ largest entries in $\widetilde U_{1:j-1,\,j}$ of absolute value at least $\tau$
  \EndIf
  \If {$\widetilde L_{j:m,j}$ has no non zero entries}
    \State Add  $\widetilde L_{jj}$ and give it a non zero value such that 
    $|\widetilde L_{jj}| \ge small$ \Comment{Algorithm \ref{alg:modify}} 
  \Else
  \State Choose the pivot row $i$ in column $\widetilde L_{j:m,j}$ \Comment{Algorithm \ref{alg:find_pivot}}
  \State If $|\widetilde L_{ij}| < small$, modify its value \Comment{Pivot too small. Modify it using Algorithm \ref{alg:modify}}
  \State Interchange rows $i$ and $j$ \Comment{Row permutation of $A$}
      \EndIf \Comment{$j$ is now the pivot row}
    \State Scale $\widetilde L_{j+1:m,\,j}=\widetilde L_{j+1:m,\,j}/\widetilde L_{jj}$
      \State Set $\widetilde U_{jj}= \widetilde L_{jj}$ and $\widetilde L_{jj}= 1$ \Comment{$\widetilde U_{jj} \ge small$}
  \State Keep at most $p$ largest entries  in $\widetilde L_{j+1:m,\,j}$ with absolute value at least $\tau$
  \State Update $rc(j+1:m)$ to hold row counts in $A_{j+1:m,\,j+1:n}$.
\EndFor
\end{algorithmic}
\end{algorithm}

\medskip
\begin{algorithm}\caption{\bf Choose  the pivot in column $\widetilde L_{j:m,j}$}\label{alg:find_pivot}
\textbf{Input:} Column $\widetilde L_{j:m,j}$ of the lower trapezoidal matrix $\widetilde L\in \mathbb{R}^{m \times n}, m \ge n$, threshold pivoting parameter $0 < \mu \le 1$,
current row counts $rc(j:m)$ \\
\smallskip
\textbf{Output:} Pivot row $k$  
\smallskip\hrule\smallskip
\setstretch{1.45}\begin{algorithmic}[1]
\State Construct the set $R(j) = \{ \,q \ : \ |\widetilde L_{qj}| \ge \mu \max_i \{ \, |\widetilde L_{ij}| \; : \,j \le q \le m\} \,\}$ 
\State Choose $k \in R(j)$ such that $rc(k) \le rc(q) \ for \ all  \ q \in R(j)$ \Comment{Ties are broken arbitrarily}
\end{algorithmic}
\end{algorithm}

\medskip
\begin{algorithm}\caption{\bf Modify $L_{ij}$ (see \cite{lish:2011})}\label{alg:modify}
\textbf{Input:} Entry $\widetilde  L_{ij}$ to be  modified,  column dimension $n$ of $\widetilde L$, parameter $small > 0$.  \\
\smallskip
\textbf{Output:} Modified  $\widetilde  L_{ij} \ge small$.
\smallskip\hrule\smallskip
\setstretch{1.45}\begin{algorithmic}[1]
\State  Compute $\beta=10^{-2(1-j/n)}$ \Comment{$\beta$ increases with $j$}
\State Set  $\widetilde L_{ij}=\max(\,\beta\, \|\ A_{1:m,j}\|_{\infty}, \;small\,)$       
\end{algorithmic}
\end{algorithm}
During the ILU factorization, even if the matrix $A$ is of full rank and not ill-conditioned, dropping entries can result in a null column and hence a zero pivot. Such breakdown is most likely towards the end of the computation. When a zero (or unacceptably small) pivot occurs, 
we employ the local modification strategy proposed by Li and Shao~\cite{lish:2011}, given here as
Algorithm~\ref{alg:modify}. Modifying an entry $\widetilde L_{ij}$ in this way is based on two observations. First, when there is a (close to) zero pivot it needs to be replaced by a sufficiently large non zero value
so that  within each application of the ILU preconditioner, the solve involving  $\widetilde U$ is stable. 
The second observation is that as the factorization progresses, $\beta=10^{-2(1-j/n)}$ (and hence the modified values) increases  with the column index $j$.
Adding a small perturbation to a small pivot is a simple mechanism  that enables the
incomplete factorization to finish without breakdown and prevents the diagonal entries of $\widetilde U$ from being too small. 

\section{Numerical experiments}
Having introduced our LIUP preconditioner and outlined how it is applied, in this section we report on numerical experiments and illustrate the effectiveness of the proposed ILU preconditioner for a range of problems that originate in practical applications.

\label{sec: experiments}
\subsection{Test environment}

Our test examples are listed in Table~\ref{T:test problems} in increasing order of the ratio $(m-n)/m$. 
We include both quasi square problems (for which the ratio is small) and problems for which $m$ is significantly larger than $n$. The problems
come from either the  SuiteSparse Matrix Collection\footnote{\url{https://sparse.tamu.edu/}}
or the CUTEst linear
programme set\footnote{\url{https://github.com/ralna/CUTEst}} and comprise a subset of those used by Gould and Scott in their study of numerical methods for
solving large-scale least-squares problems \cite{gosc:2015b,gosc:2017}. 
The  examples  that are not SuiteSparse matrices have an identifier of the form Gould\_Scott/problem\_name. Note that some of the 
SuiteSparse matrices have been modified.
Specifically, any null rows or columns are removed
and the matrix transposed to give an overdetermined system ($m > n$).
In all our experiments, $A$ is prescaled so that the 2-norm of each column of the scaled matrix is equal to 1. The problems were chosen because they are classified 
by Gould and Scott as challenging to solve, particularly for iterative
solvers; many are highly ill conditioned and/or have one or more rows that have significantly more entries than the rest (that is, they are
sparse-dense problems).

We take the entries of the vector $b$ to be random numbers in the interval $[-1,1]$.
This results in a non zero residual.
For a given problem, the same $b$ is used for each experiment.

A key practical issue in the implementation of any iterative method is the
choice of stopping critera. It is desirable to consider  criteria based
on the error in the least-squares solution. One possibility is
\begin{equation}
\label{eq:tight_stop}
\frac{\|x_{true} - x^{(i)}\|_{A^T A}}{\|A\| \, \|x^{(i)}\| + \|b\|}
= 
\frac{\|A A^\dagger r^{(i)}\|}{\|A\| \, \|x^{(i)}\| + \|b\|}
\le \delta,
\end{equation}
where $\delta >0$ is the desired accuracy. Here,
$A^\dagger = (A^T A)^{-1} A^T$ is the pseudo-inverse, $x_{true} = A^\dagger \,b$ is exact the minimum-norm least-squares solution and
$\|v\|_{A^T A}^2 = v^T A^T A v$.
This  measures the error in the energy norm induced by $A^T A$
and is directly related to the distance to the exact solution.
Importantly, \eqref{eq:tight_stop} remains valid under scaling
and preconditioning, because it is expressed in terms of the original problem.
It is shown in \cite{cptp:2009} that (\ref{eq:tight_stop}) is asymptotically tight in the limit as $x^{(i)}$ approaches $x_{true}$ and if $\delta = O(u)$ then the computed solution is a backward stable least-squares solution \cite{cptp:2009,jitp:2010}. 

Recent work of Pape\v{z} and Tich\'y~\cite{pati:2024} demonstrates that a practical and inexpensive 
estimate  $estim_{i} \approx \|x_{true} - x^{(i)}\|_{A^T A}^2$ can be computed from quantities
available within the CGLS (or LSQR) iteration. The estimate can be used until the level of
maximal attainable accuracy is reached. We employ the estimator in
\cite{sctu:2025b} and show that is reliable in practical applications.
We therefore terminate CGLS when 
\begin{equation}\label{eq:ratio_PT}
ratio_{PT} = \frac{estim_{i}}{\|A\| \, \|x^{(i)}\| + \|b\|} \le \delta
\end{equation}
An estimate of the 2-norm of $A$ is computed using the power method. In our experiments, we set $\delta = 10^{-10}$.
and limit the number CGLS iterations to 2000.

\begin{table}[htbp]
\caption{Test matrices in order of increasing $(m-n)/m$. $nz(A)$ denotes the number of entries in $A$ and $lmax$ is the largest number of entries in a row.
}
\label{T:test problems}\vspace{3mm}
{
\footnotesize
\begin{center}
\tabcolsep=1.8mm
\begin{tabular}{l|rrrrr} \hline
Problem number and identifier     &   $m$  & $n$ & $nz(A)$ & $(m-n)/m$   & $lmax $ 
 \\
\hline\Tstrut
1. {\tt Meszaros/ex3sta1}       & 17,516 & 17,443 & 68,779 & 0.004 & 46  \\
2. {\tt HB/beaflw}              &    500 &    492 & 53,403 & 0.02  &400  \\
3. {\tt JGD\_Groebner/f855\_mat9} &  2,511 & 2,456 & 171,214 & 0.02 & 829 \\
4. {\tt LPnetlib/lpi\_klein3 }    & 1,082  &   994 & 13,101  & 0.08  & 275 \\
5. {\tt JGD\_Forest/TF14}         & 3,159  & 2,644  & 29,862 & 0.16  & 13  \\
\hline\Tstrut
6. {\tt JGD\_G5/IG5-13}           & 3,994  & 2,532  & 91,209 & 0.16  & 119  \\
7. {\tt JGD\_Forest/TF16}         & 19,320 & 15,437 & 216,173 & 0.20  & 15 \\
8. {\tt JGD\_Groebner/c8\_mat11}  & 5,761  & 4,562  & 2,462,970 & 0.21 & 2,417 \\
9. {\tt Meszaros/scagr7-2c}       & 3,479  & 2,447  & 9,005 & 0.30  & 449 \\
10. {\tt Ycheng/psse1 }            & 14,318 & 11,028 & 57,376 & 0.30 & 18  \\
\hline\Tstrut

11. {\tt Meszaros/scagr7-2b}       & 13,847 &  9,743 & 35,885 & 0.30 & 1,793  \\
12. {\tt LPnetlib/lp\_ken\_11}     & 21,349 & 14,694 & 49,058 & 0.31 & 3  \\
13. {\tt Gould\_Scott/GE}          & 16,369 & 10,099 & 44,825 & 0.38 & 36  \\
14. {\tt Gould\_Scott/LARGE001} & 7,176 &  4,162 & 18,887 & 0.42 & 11  \\
15. {\tt Gould\_Scott/DELF000}     &  5,543 &  3,128 & 13,741 & 0.44 & 9  \\
\hline\Tstrut

16. {\tt Meszaros/scrs8-2r}        & 27,691 & 14,364 & 58,439 & 0.48 & 2,052 \\
17. {\tt Ycheng/psse0 }            & 26,722 & 11,028 & 102,432 & 0.59 &  4 \\
18. {\tt Gould\_Scott/PILOTNOV}    &  2,446 &    975 &  13,331 & 0.60 &  40\\
19. {\tt HB/illc1850 }             &  1,850 &    712 &   8,758 & 0.62 &  5 \\
20. {\tt Ycheng/psse2 }            & 28,634 & 11,028 &  115,262  & 0.61 &  28\\
\hline\Tstrut

21. {\tt Gould\_Scott/D2Q06C}  &  5,831 &  2,171 &    33,081 & 0.63 & 34 \\
22. {\tt JGD\_Kocay/Trec14}    & 15,904 &  3,159 & 2,872,265 & 0.80 & 2,500\\
23. {\tt Meszaros/scsd8-2b}        & 35,910 &  5,130 & 112,770 & 0.86 &   514\\
24. {\tt LPnetlib/lp\_osa\_07}    &  25,067 & 1,118  &   144,812 & 0.96 & 6 \\
\hline
 \end{tabular}
\end{center}
}
\end{table}

\subsection{ILUP($p, \tau$) results}

\begin{table}[htbp]
\caption{Results for CGLS preconditioned by ILU($10,\tau$) with $\widetilde S$ given by (\ref{eq: S}) held and factorized as a dense matrix. 
$its$ is the number of CGLS iterations.
$psize$ denotes the storage for the matrices used
in applying the preconditioner (Algorithm~\ref{alg:apply_ilu}).
$ratio_{PT}$ is the estimate of the backward error given by \eqref{eq:ratio_PT}.
$nmod$ is the number of pivots that are modified. $\dag$ indicates requested accuracy not obtained.}
\label{T:results_dense}\vspace{3mm}
{
\footnotesize
\begin{center}
\tabcolsep=1.8mm
\begin{tabular}{l|rclr|rclr} \hline
Problem     &  \multicolumn{4}{c|}{$\tau=0.0$} &
\multicolumn{4}{c}{$\tau=0.1$} \\
& $its$ & $psize$ & $ratio_{PT}$ & $nmod$ 
& $its$ & $psize$ & $ratio_{PT}$  & $nmod$\\
\hline\Tstrut
1. 
& 2 & 1.65$\times 10^5$&1.84$\times 10^{-12}$& 163& 2 & 1.34$\times 10^5$&1.44$\times 10^{-11}$& 136\\

2. 
& 3 & 1.13$\times 10^4$&3.56$\times 10^{-11}$& 30 & 2 &8.49$\times 10^3$&1.02$\times 10^{-13}$& 44  \\

3. 
&  3 &3.31$\times 10^4$&2.93$\times 10^{-18}$& 162& 3 &3.24$\times 10^4$&1.83$\times 10^{-11}$& 173 \\

4. 
& 3 &4.12$\times 10^4$&5.13$\times 10^{-16}$&0&  2 & 1.82$\times 10^4$&8.70$\times 10^{-11}$&0\\

5. 
& $\dag$ &8.77$\times 10^5$&6.06$\times 10^{-7}$&0&  2 &9.24$\times 10^5$&7.33$\times 10^{-17}$&0\\

\hline\Tstrut
6. 
& 4 & 1.23$\times 10^6$&1.03$\times 10^{-13}$&0& 4 & 1.26$\times 10^6$&7.69$\times 10^{-12}$&0\\

7. 
& 2 &4.06$\times 10^7$&2.10$\times 10^{-17}$&0& $\dag$ &4.11$\times 10^7$&2.80$\times 10^{-6}$&0\\

8. 
&  3 & 1.03$\times 10^6$&3.36$\times 10^{-15}$&9& 2 &9.97$\times 10^5$&6.59$\times 10^{-12}$&6 \\

9. 
& 10 &5.92$\times 10^5$&1.83$\times 10^{-11}$&0&  10 &5.90$\times 10^5$&1.83$\times 10^{-11}$&0 \\ 

10. 
& 36 &5.89$\times 10^6$&8.63$\times 10^{-11}$&0&  5 &5.53$\times 10^6$&6.57$\times 10^{-12}$&205\\

\hline\Tstrut

11. 
& 10 &8.67$\times 10^6$&2.46$\times 10^{-11}$&0&  10 &8.67$\times 10^6$&2.46$\times 10^{-11}$&0\\

12. 
& 279 &2.22$\times 10^7$&2.98$\times 10^{-11}$&0&  146 &2.22$\times 10^7$&3.27$\times 10^{-11}$&0 \\

13. 
& $\dag$ &2.00$\times 10^7$&2.50$\times 10^{-5}$&0&  $\dag$ & 1.98$\times 10^7$&1.07$\times 10^{-5}$&0\\

14. 
& 121 &5.19$\times 10^6$&9.83$\times 10^{-11}$&0&  4 &4.63$\times 10^6$&1.72$\times 10^{-12}$&0 \\

15. 
& 21 &3.45$\times 10^6$&6.18$\times 10^{-11}$&0&  38 &2.97$\times 10^6$&2.49$\times 10^{-11}$&0 \\
\hline\Tstrut

16. 
& 10 &8.89$\times 10^7$&2.89$\times 10^{-11}$&0&  10 &8.89$\times 10^7$&2.62$\times 10^{-11}$&0 \\

17. 
& 8 & 1.24$\times 10^8$&4.98$\times 10^{-11}$&0&  2 & 1.23$\times 10^8$&9.69$\times 10^{-11}$&91 \\

18. 
&  6 & 1.21$\times 10^6$&4.95$\times 10^{-11}$&0& 46 & 1.14$\times 10^6$&8.46$\times 10^{-11}$&0\\

19. 
& $\dag$ &6.71$\times 10^5$ &1.42$\times 10^{-8}$& 0& $\dag$ & 6.64$\times 10^5$ &4.64$\times 10^{-10}$&0\\

20. 
& 29 & 1.55$\times 10^8$&3.38$\times 10^{-11}$&0& 3 & 1.55$\times 10^8$&5.38$\times 10^{-12}$&78 \\

\hline\Tstrut

21. 
&  43 &6.76$\times 10^6$&9.55$\times 10^{-11}$&0& 75 &6.76$\times 10^6$&9.59$\times 10^{-11}$&0\\

22. 
& 3 &8.14$\times 10^7$&1.38$\times 10^{-12}$&0 & 3 & 8.14$\times 10^7$ &2.72$\times 10^{-13}$& 0  \\

23. 
&  21 &4.74$\times 10^8$&9.20$\times 10^{-11}$&0& 19 &4.74$\times 10^8$&3.95$\times 10^{-11}$&0\\

24. 
& 19 &2.87$\times 10^8$ &7.08$\times 10^{-11}$&0& 19 &2.87$\times 10^8$&5.93$\times 10^{-11}$&0 \\
\hline
\end{tabular}
\end{center}
}
\end{table}

\begin{table}[htbp]
\caption{Results for CGLS preconditioned by ILU($10,\tau$) with
the solve in Step 2 of Algorithm~\ref{alg:apply_ilu} approximated by
(a) two iterations of the conjugate gradient method and
(b) $\widehat S=I$. The corresponding CGLS iteration counts are
$its_a$ and $its_b$ and 
$ratio_{PTa}$ and $ratio_{PTb}$ are the estimates of the backward errors \eqref{eq:ratio_PT}.
$psize$ denotes the storage $nz(\widetilde L_1)+
nz(\widetilde L_2) + nz(\widetilde U_1)$ for the matrices used
in applying the preconditioner (Algorithm~\ref{alg:apply_ilu}). 
$\dag$ indicates requested accuracy not obtained.
}
\label{T:results_CG}\vspace{3mm}
{
\footnotesize
\begin{center}
\tabcolsep=.9mm
\begin{tabular}{l|cccll|cccll} \hline
Problem     &  \multicolumn{5}{c|}{$\tau=0.0$} &
\multicolumn{5}{c}{$\tau=0.1$} \\
& $its_a$ & $its_b$ & $psize$ & $ratio_{PTa}$ & $ratio_{PTb}$   
& $its_a$ & $its_b$ & $psize$ & $ratio_{PTa}$ & $ratio_{PTb}$   \\
\hline\Tstrut
1. 
& 2 & 2 & 1.62$\times 10^5$ &7.51$\times 10^{-12}$&1.83$\times 10^{-11}$& 2 & 2 & 1.31$\times 10^5$ &9.13$\times 10^{-12}$&2.86$\times 10^{-13}$  \\

2. 
& 2 & 3 & 1.13$\times 10^4$ &2.36$\times 10^{-11}$&1.29$\times 10^{-11}$& 2 & 3 & 8.44$\times 10^3$   &4.88$\times 10^{-12}$&1.69$\times 10^{-14}$  \\

3. %
& 2 & 2 & 5.08$\times 10^4$ &7.94$\times 10^{-18}$&2.32$\times 10^{-17}$& 3 & 2 & 3.08$\times 10^4$  &1.13$\times 10^{-15}$&2.56$\times 10^{-16}$  \\

4. 
& 3 & 4 & 3.73$\times 10^4$ &2.02$\times 10^{-14}$&8.55$\times 10^{-13}$& 2 & 2 & 1.43$\times 10^4$ &7.23$\times 10^{-11}$&1.38$\times 10^{-12}$  \\

5. 
& 2 &$\dag$ & 7.44$\times 10^5$ &1.52$\times 10^{-16}$&6.17$\times 10^{-7}$& $\dag$ &$\dag$ & 7.91$\times 10^5$ &4.53$\times 10^{-6}$&9.38$\times 10^{-6}$  \\
\hline\Tstrut

6.
& 5    & 3 & 1.56$\times 10^5$ &9.25$\times 10^{-11}$&7.99$\times 10^{-13}$&  7 & 3 & 1.87$\times 10^5$ &9.50$\times 10^{-11}$&4.17$\times 10^{-16}$  \\

7. 
& $\dag$ & 2 & 4.30$\times 10^7$ &3.19$\times 10^{-5}$&8.92$\times 10^{-18}$& $\dag$ &2 & 3.36$\times 10^7$ &7.33$\times 10^{-7}$&1.85$\times 10^{-17}$  \\

8. 
& 3 & 2 & 3.71$\times 10^5$ &1.32$\times 10^{-16}$& 6.00$\times 10^{-13}$& 3 & 2 & 2.77$\times 10^5$  &5.08$\times 10^{-12}$&5.58$\times 10^{-14}$  \\

9. 
& 6 & 3 & 5.90$\times 10^4$ & 1.94$\times 10^{-12}$&4.79$\times 10^{-17}$& 6 & 3 & 5.73$\times 10^4$  &1.94$\times 10^{-12}$&4.79$\times 10^{-17}$  \\

10. 
& 3 &4 & 4.75$\times 10^5$ &1.49$\times 10^{-11}$&9.82$\times 10^{-11}$& 4 &35 & 1.21$\times 10^5$ &6.44$\times 10^{-12}$&1.34$\times 10^{-12}$  \\
\hline\Tstrut

11. 
& 6 &4 & 2.45$\times 10^5$ &3.21$\times 10^{-12}$&2.33$\times 10^{-19}$& 6 &11 & 2.46$\times 10^5$ &3.21$\times 10^{-12}$&7.52$\times 10^{-11}$  \\

12. 
& 23 &4 & 1.43$\times 10^5$ &9.24$\times 10^{-11}$&8.98$\times 10^{-12}$& 28 &6 & 1.43$\times 10^5$  &2.72$\times 10^{-11}$&6.24$\times 10^{-12}$  \\

13. 
& 7 &2 & 3.52$\times 10^5$ &2.89$\times 10^{-11}$&9.65$\times 10^{-12}$& 6 &3 & 1.74$\times 10^5$ &4.05$\times 10^{-11}$&3.49$\times 10^{-13}$  \\

14. 
& 4 &3 & 6.51$\times 10^5$ &6.61$\times 10^{-14}$&1.56$\times 10^{-16}$& 5 &5 & 8.28$\times 10^4$  &1.09$\times 10^{-11}$&6.10$\times 10^{-12}$  \\

15. 
& 7 &3 & 5.41$\times 10^5$ &4.72$\times 10^{-11}$&8.92$\times 10^{-17}$& 5 &3 & 5.74$\times 10^4$  &2.62$\times 10^{-12}$&8.82$\times 10^{-13}$  \\
\hline\Tstrut

16. 
& 3 &3 & 9.81$\times 10^4$ &1.57$\times 10^{-12}$& 2.51$\times 10^{-11}$& 3 &3 & 9.14$\times 10^4$  &1.79$\times 10^{-12}$&2.51$\times 10^{-11}$  \\

17. 
& 6 &5 & 3.56$\times 10^5$ &1.94$\times 10^{-11}$&1.27$\times 10^{-12}$& 3 &3 & 1.54$\times 10^5$  &7.29$\times 10^{-12}$&6.50$\times 10^{-13}$  \\

18. 
& 4 &3 & 1.64$\times 10^5$ &4.02$\times 10^{-14}$&1.08$\times 10^{-12}$& 5 &3 & 5.49$\times 10^4$ &2.38$\times 10^{-11}$&3.94$\times 10^{-15}$  \\

19. 
& 5 &4 & 2.28$\times 10^4$ &3.53$\times 10^{-11}$&3.81$\times 10^{-11}$& 12 &10 & 1.63$\times 10^4$ &2.92$\times 10^{-11}$&2.39$\times 10^{-11}$  \\

20. 
& 7 &3 & 3.22$\times 10^5$ &5.51$\times 10^{-11}$&1.80$\times 10^{-14}$& 3 &$\dag$ & 1.61$\times 10^5$  &1.16$\times 10^{-12}$&1.73$\times 10^{-6}$  \\

\hline\Tstrut

21. 
& 4 &4 & 6.30$\times 10^4$ &5.31$\times 10^{-11}$&4.33$\times 10^{-14}$& 7 &3 & 5.89$\times 10^4$ &1.00$\times 10^{-11}$&6.73$\times 10^{-11}$  \\

22. 
&  4 &3 & 1.65$\times 10^5$ &3.42$\times 10^{-11}$&4.21$\times 10^{-16}$& 3 &3 & 1.55$\times 10^5$ &1.37$\times 10^{-14}$&3.66$\times 10^{-13}$  \\

23. 
&  5 &3 & 1.62$\times 10^5$ &6.93$\times 10^{-12}$&4.59$\times 10^{-11}$& 7 &4 & 1.61$\times 10^5$ &4.05$\times 10^{-12}$& 1.72$\times 10^{-11}$  \\

24. 
& 3 & 4 & 1.35$\times 10^4$  &6.57$\times 10^{-11}$&3.01$\times 10^{-13}$&3 & 4 & 1.35$\times 10^4$ &5.71$\times 10^{-11}$&3.00$\times 10^{-13}$   \\
\hline
\end{tabular}
\end{center}
}
\end{table}

Tables~\ref{T:results_dense} 
and \ref{T:results_CG} present results for ILUP($p, \tau$) 
with $p = 10$ and $\tau = 0$ and 0.1. The parameter $small$ is set to $10^{-10}$ and the pivoting threshold is $\mu = 0.1$.
In Table~\ref{T:results_dense}, the results are for explicitly computing and factorizing the dense matrix $\widetilde S$; those in  Table~\ref{T:results_CG} are for solving the system $\widetilde S w = u$
at Step 2 of Algorithm~\ref{alg:apply_ilu} using
(a) two iterations of the conjugate gradient method and
(b) replacing $\widetilde S$ by the identity.
For each test, we report the number of CGLS iterations and the storage $psize$ needed for the matrices used in applying the preconditioner
(Algorithm~\ref{alg:apply_ilu}). It is clear that if $\widetilde S$ is held as a dense matrix then as $(m-n)/m$ increases, the storage for its factors 
dominates and makes the method impractical for large problems.
Avoiding forming $\widetilde S$ (Table~\ref{T:results_CG}) 
has the potential to  allow much larger problems to
be solved, with the memory determined primarily by the choice of $p$.

 While larger values of $p$ increase the storage requirements, for most of our test matrices, this did not give a reduction in the CGLS iteration count and so detailed results for other values are not reported.
The counts  and storage can be sensitive to the choice of the dropping threshold $\tau$
but not always in a consistent or predictable way. 
For some examples, including problems 3 and 4,  $\tau = 0.1$ leads to significantly sparser $\widetilde L$ and $\widetilde U$, without adversely affecting the
preconditioner quality. But increasing $\tau$ does not necessarily
give sparser factors and smaller $psize$ (for instance,
problem 6). This is because dropping more entries from the current
column $j$ may not result in the remaining columns having fewer entries of small absolute value. For some problems,  $\tau>0$ can reduce the
iteration count, even if more pivots are modified during the
ILU factorization (this is seen in Table~\ref{T:results_dense} for
problem 10). We note that a significant number
of pivot modifications during the ILU factorization can be made and still give a preconditioner
that results in rapid convergence  of CGLS, illustrating the robustness 
and effectiveness of the proposed ILU algorithm.

An unexpected finding is that using a small number of CG iterations
to solve the system $\widetilde S w = u$ (or even replacing $\widetilde S$ by the identity) can perform significantly better than factorizing $\widetilde S$. This is seen even for problems
that are not quasi square.
We observe that similar effects have been reported when employing some algebraic preconditioners for solving linear systems of equations, in particular, nonsymmetric incomplete factorization or approximate inverse preconditioners. Allowing more fill-in can sometimes be counter-productive; see a comparison 
of the level-based preconditioners ILU(0) and ILU(1) reported in \cite{beht:00}, which  motivated later research into possible benefits of pre-sparsifying the system matrix \cite{sctu:2011}.

While the ILU preconditioner performs well
in many of our tests and $ratio_{PT}$ achieves the requested accuracy after a
small number of iterations, this is not always the case. In 
the following figures, we plot $ratio_{PT}$ against the CGLS iteration count (either until convergence or 2000 iterations, whichever occurs first).
Figure~\ref{fig:large_estimate} illustrates the case that 
$ratio_{PT}$ decreases steadily whereas
Figure~\ref{fig:GEestimate} shows that for problems 5 and 13,
after the initial iterations, $ratio_{PT}$ decreases very slowly
and becomes close to stagnating;
this is most likely because the maximal attainable accuracy has been (approximately) reached \cite{gree:97a,pati:2024}.
For a small number of our test cases, $ratio_{PT}$ initially does not decrease smoothly but then rapidly decreases to the required accuracy. This is illustrated in Figure~\ref{fig:psse1estimate}.
We are not able to predict this behaviour a priori.

\begin{figure}[htbp]
\begin{center}
        \includegraphics[scale=0.45]{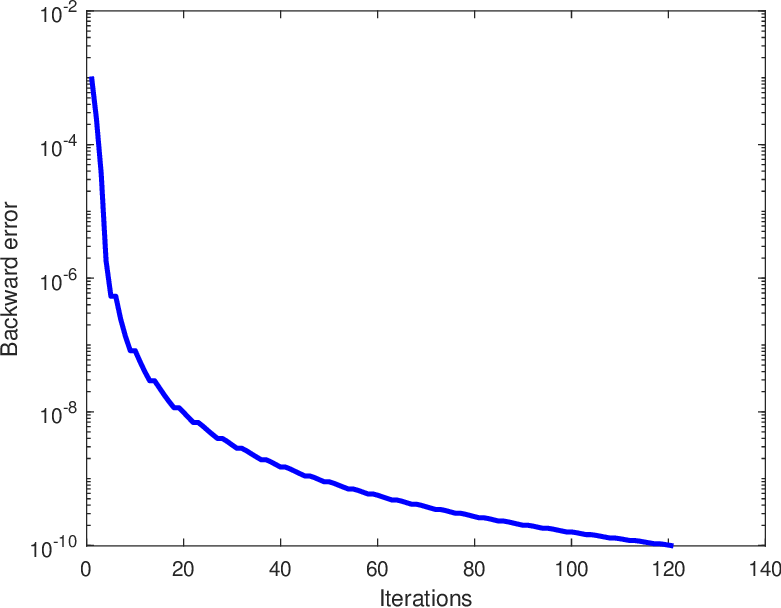}
\end{center}
\caption{The convergence of the backward error estimate $ratio_{PT}$ for
the problem 14  with preconditioner ILU(10,0.0). The matrix $\widetilde S$ is constructed and factorized as a dense matrix.}
\label{fig:large_estimate}
\end{figure}

\begin{figure}[htbp]
\begin{center}
        \includegraphics[scale=0.45]{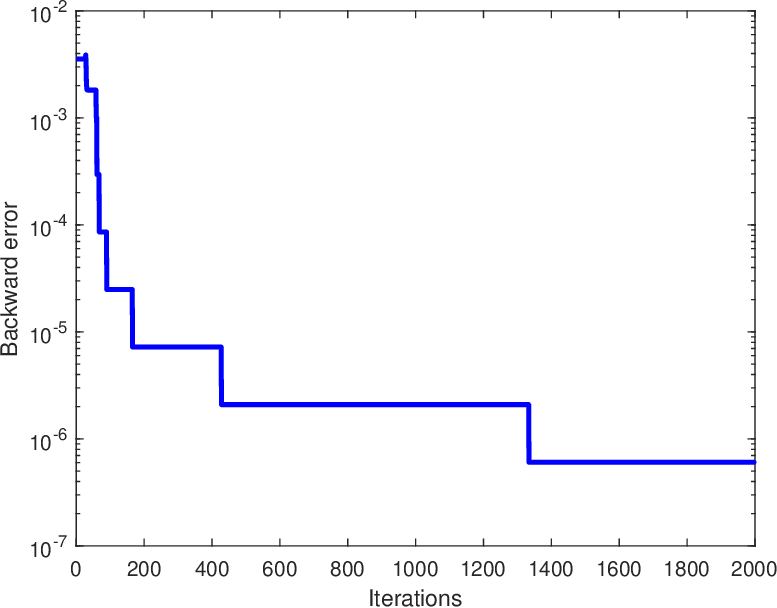}
 \hspace{1cm}
        \includegraphics[scale=0.45]{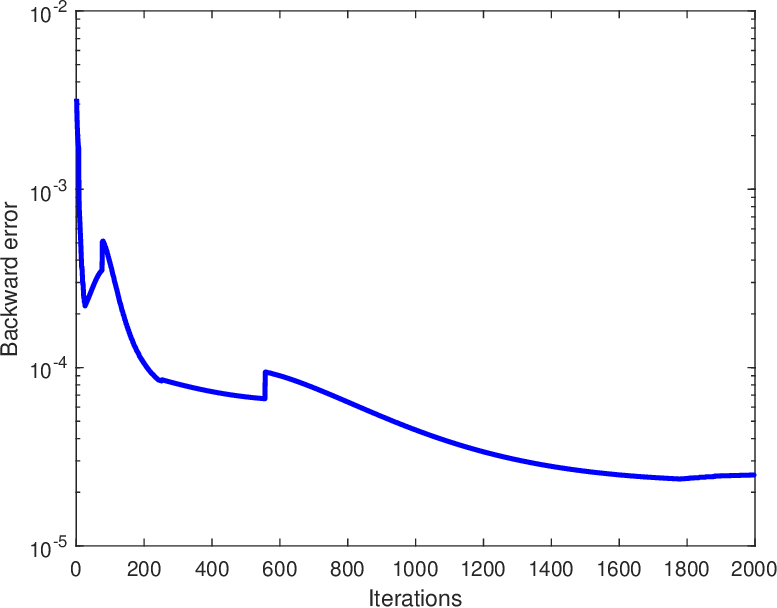}
\end{center}
\caption{The convergence of the backward error estimate $ratio_{PT}$ for
problems 5 and 13  with preconditioner ILU(10,0.0). The matrix $\widetilde S$ is constructed and factorized as a dense matrix.}
\label{fig:GEestimate}
\end{figure}

\begin{figure}[htbp]
\begin{center}
    \includegraphics[scale=0.45]{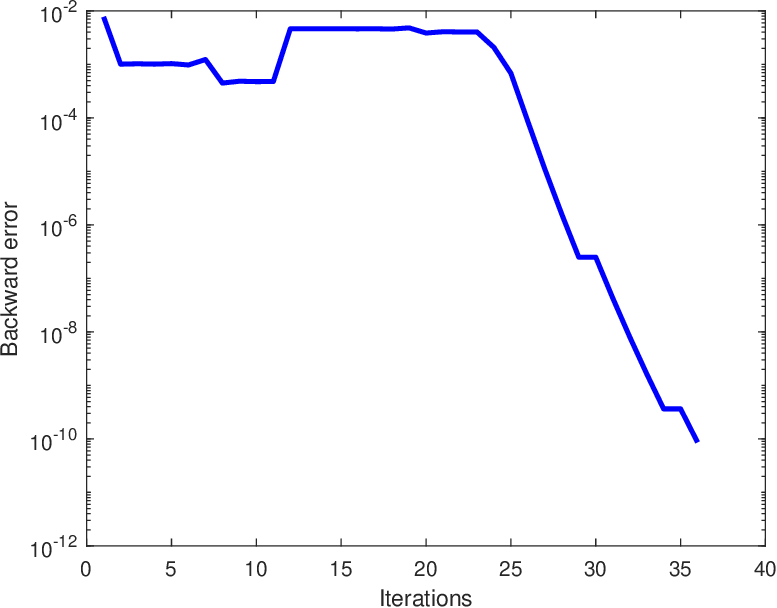} \hspace{1cm}
        \includegraphics[scale=0.45]{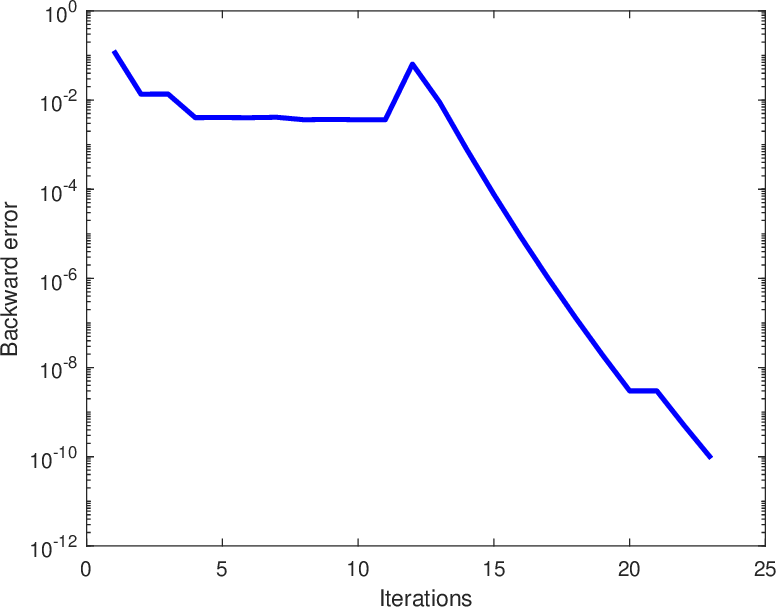}

\end{center}
\caption{The convergence of the backward error estimate $ratio_{PT}$ for
problems 10 and 12 with preconditioner ILU(10,0.0). For problem 10 (left) the matrix $\widetilde S$ is constructed and factorized as a dense matrix; for problem 12 (right),
Step 2 of Algorithm~\ref{alg:apply_ilu} is approximated by
two iterations of the conjugate gradient method. }
\label{fig:psse1estimate}
\end{figure}

\subsection{Comparison with an IC preconditioner}

It is beyond the scope of this paper to compare the proposed ILU preconditioner with a range of other preconditioners for LLS problems that have been proposed in the literature.
However, it is of interest to include some results for an incomplete Cholesky (IC) factorization preconditioner that was the most effective of the least squares preconditioners reported on in \cite{gosc:2017}. This memory-limited IC-based preconditioner is based on computing
$C+ \alpha I \approx \widetilde L_{IC} \widetilde L_{IC}^T$, where
$C = A^TA$  and $\alpha>0$ is chosen to prevent breakdown of the factorization. The user controls the amount of memory used in the construction of
$\widetilde L$ as well as the maximum number of entries in each column of $\widetilde L_{IC}$.
Details of the approach are given in \cite{sctu:2014a,sctu:2014b}.
We use the implementation available as the HSL package {\tt HSL\_MI35}.

A number of our test problems contain one or more rows that have many more entries
than the other rows (see Table~\ref{T:test problems}).
The experimental results in the last section demonstrate that the ILU preconditioner
is able to handle these as well as it handles fully sparse problems. Importantly, it is unnecessary to 
explicitly determine which rows should be classified as dense: the proposed ILU approach is agnostic to the
sparsity pattern of $A$ and requires no preordering of $A$. This is not the case for preconditioners that are based on 
the normal matrix, for which it is necessary to handle the dense rows separately (recall the discussion in Section~\ref{sec:sparse-dense}); thus we do
not report results for {\tt HSL\_MI35} for these problems.

 \begin{table}[htbp]
\caption{Results for CGLS with the IC preconditioner {\tt HSL\_MI35}. 
$its$ is the number of CGLS iterations.
$nz(\widetilde L_{IC})$ denotes the storage for incomplete Cholesky factor.
$ratio_{PT}$ is the estimate of the backward error given by \eqref{eq:ratio_PT}.
$\dag$ indicates requested accuracy not obtained.}
\label{T:results_IC}\vspace{3mm}
{
\footnotesize
\begin{center}
\tabcolsep=7mm
\begin{tabular}{l|rclr} \hline
Problem  & $its$ & $nz(\widetilde L_{IC})$ & $ratio_{PT}$ \\
\hline\Tstrut
5.   & $\dag$   &  8.15$\times 10^4$ &    7.765$\times 10^{-5}$  \\
\hline\Tstrut

7.   & $\dag$   &  4.78$\times 10^5$ &    1.426$\times 10^{-4}$  \\ 
10.  &        6 &  3.38$\times 10^5$ &    9.073$\times 10^{-11}$  \\
\hline\Tstrut

12.  &       12 &  4.55$\times 10^5$ &    4.880$\times 10^{-11}$  \\ 
13.  &        9 &  3.12$\times 10^5$ &    6.729$\times 10^{-11}$ \\ 
14.  &        7 &  1.28$\times 10^5$ &    9.730$\times 10^{-12}$  \\ 
15.  &        4 &  9.59$\times 10^4$ &    4.971$\times 10^{-14}$  \\
\hline\Tstrut

17.  &       5  &  3.29$\times 10^5$ &    1.428$\times 10^{-11}$  \\ 
18.  &       16 &  2.96$\times 10^4$ &    9.609$\times 10^{-11}$  \\ 
19.  &        3 &  1.14$\times 10^4$ &    9.494$\times 10^{-14}$  \\
20.  &       12 &  3.36$\times 10^5$ &    9.431$\times 10^{-11}$  \\
\hline\Tstrut

21.  &       43 &  6.67$\times 10^4$ &    9.874$\times 10^{-11}$  \\ 
24.  &       92 &  3.42$\times 10^4$ &    2.442$\times 10^{-11}$  \\ 
\hline
\end{tabular}
\end{center}
}
\end{table}
In Table~\ref{T:results_IC}, we present results for CGLS left preconditioned using {\tt HSL\_MI35}. The parameters $\tt lsize$ and $rsize$ that control the memory requirements and size of $\widetilde L_{IC}$ are set to 30 and the drop tolerances are set to zero.
Comparing the results to those in Table~\ref{T:results_CG}, we see that neither approach
consistently outperforms the other, although for a number of examples (including problems 23 and 24) the ILU($10,\tau$) preconditioner
has a lower iteration count, while using less memory. We conclude that
the new ILU preconditioner potentially offers an  attractive alternative to the IC preconditioner.

\section{Concluding remarks}
\label{sec: conclusions}
In this work, we have introduced a class of ILU‑based row‑splitting preconditioners for large sparse linear least‑squares problems. Our approach uses an incomplete LU factorization to identify a well‑conditioned square submatrix 
$A_1$ of $A$ and to then construct an algebraic preconditioner that avoids forming or factorizing the normal matrix 
$A^TA$. This makes the method particularly attractive for problems in which 
the normal matrix is ill-conditioned and/or
 is significantly denser than $A$.

The proposed framework unifies several ideas: additive correction formulations, Woodbury‑type updates, and sparse ILU factorizations with threshold pivoting. By exploiting the structure of the row splitting, applying the preconditioner requires triangular solves with the sparse incomplete factors of 
$A_1$ and solving an auxiliary symmetric positive definite system involving the matrix 
$S$ given by (\refeq{eq:S}). When the least‑squares matrix $A$ is quasi‑square, $S$ is inexpensive to assemble and factorize. Instead, the auxiliary  system
can be solved approximately using a small number of steps of an iterative method or, as our test results show, it can be avoided altogether by using $S \approx I$.

Our numerical experiments demonstrate that the proposed ILUP preconditioners are competitive with, and sometimes superior to, incomplete Cholesky preconditioners applied to the normal equations. The method also performs well on sparse‑dense problems, where the row‑splitting naturally isolates the dense rows.

The flexibility of the approach allows for several extensions. The ILU factorization can be updated efficiently when rows are added or removed, making it suitable for dynamic or streaming least‑squares settings. Future work will explore the use of mixed precision arithmetic for reducing the cost of the factorization
and/or limiting the memory required for holding the incomplete factors. This will
build on our recent experience of constructing incomplete Cholesky factorization
preconditioners using mixed precision~\cite{sctu:2024,sctu:2025a}.

Overall, the results indicate that ILU‑based row‑splitting preconditioners offer a practical, scalable, and robust alternative for solving large sparse least‑squares problems, particularly in applications where forming the normal equations is either undesirable or infeasible.

\def\cprime{$'$} \def\cprime{$'$} \def\cprime{$'$}


\end{document}